\date{\today}
\def\vp{\varphi}
\def\ddzi{\frac{\partial}{\partial \z _{i}}}
\def\z{\zeta}
\def\reg{{\rm reg\,}}
\def\deg{\text{deg}\,}
\def\w{\wedge}
\def\dbar{\bar\partial}
\def\C{{\mathbb C}}
\def\w{{\wedge}}
\def\P{{\mathbb P}}
\def\Pk{{\mathbb P}}
\def\Ak{{\mathcal A}}
\def\Bk{{\mathcal B}}
\def\S{{\mathcal S}}
\def\Cu{{\mathcal C}}
\def\Hom{{\rm Hom\, }}
\def\codim{{\rm codim\,}}
\def\Im{{\rm Im\, }}
\def\Lop{{\mathcal L}}
\def\Kers{{\mathcal Ker\,  }}
\def\E{{\mathcal E}}
\def\Ok{{\mathcal O}}
\def\Q{{\mathcal Q}}
\def\Re{{\rm Re\,  }}
\def\J{{\mathcal J}}
\def\nbh{neighborhood }
\def\be{\begin{equation}}
\def\ee{\end{equation}}
\def\sko{{\vartheta}}
\newtheorem{thm}{Theorem}[section]
\newtheorem{lma}[thm]{Lemma}
\newtheorem{prop}[thm]{Proposition}
\theoremstyle{definition}
\theoremstyle{remark}
\newtheorem{remark}[thm]{Remark}
\newtheorem{ex}[thm]{Example}
\numberwithin{equation}{section}
\title[]{Division formulas on  projective
varieties}
\begin{document}

\date{\today}

\author{Mats Andersson \& Lisa Nilsson}

\address{Department of Mathematics\\Chalmers University of Technology and the University of 
Gothenburg\\S-412 96 G\"OTEBORG\\SWEDEN}

\email{matsa@chalmers.se \& Lisa.nilsson@if.se}

\subjclass{}

\thanks{The first author was
  partially supported by the Swedish 
  Research Council. The second author was supported by a postdoc grant
from the Swedish Research Council.}

\begin{abstract}
We introduce a  division formula on a possibly singular 
projective subvariety 
$X$ of complex projective space $\Pk^N$,
which, e.g.,  provides explicit representations of solutions
to various polynomial division problems
on the affine part of $X$.
Especially  we consider a global effective version of
the Brian\c con-Skoda-Huneke theorem.
\end{abstract}

\maketitle

\section{Introduction}\label{intro}

In this paper we construct a  division-interpolation integral formula for polynomial
ideals on an algebraic, not necessarily smooth, subvariety $V$ of $\C^N$ of pure dimension $n$. 
Such division formulas,  for smooth $V$,
have  been used by several authors, see, e.g., \cite{BB},
\cite{BY} and \cite{BGVY}. In \cite{AG} were introduced division formulas on $\C^n$ 
where the integration ``takes place'' over the entire compactification
$\Pk^n$.  This made it possible to make  
a more careful analysis at infinity which led to explicit representation of
membership with sharper degree estimates than by earlier known formulas.  

Our new formula involves integration  over the
closure $X$ of $V$ in $\Pk^N$. 
It is quite general and might be used to represent solutions to
a variety of membership problems, but our focus is on 
a global effective Brian\c con-Skoda-Huneke type polynomial division result
on $V$,  \cite[Theorem A]{AWsemester}, generalizing 
theorems  of Ein-Lazarsfeld, \cite{EL},  and
Hickel, \cite{Hick}, to non-smooth $V$.  
Our formula 
does not in any substantial way contribute to the proof of this result so the novelty 
is that the membership is realized by an explicit formula. One could hope that this formula
keeps arithmetic information\footnote{as does the ingenious formula in \cite{BY} in case with
Nullstellensatz data and $X=\Pk^n$.}, for instance, that the coefficients are rational if the data have
rational coefficients, but we have not been able to prove this 
except for very special examples. 
One could also hope for estimates of the size of the coefficients. In any case
we think that the very explicitness is of interest in itself. 
If applied to Nullstellensatz data we get a representation
of a solution with a polynomial degree that  is not too far 
from the optimal one,  cf., Remark \ref{julle}  below.

Let $X$ be the closure of $V$ in $\Pk^N$, let $J_X$ 
denote the associated homogeneous ideal in the graded ring $S=\C[z_0,\ldots,z_N]$, 
and let $S(\ell)$ denote the module $S$ but where
all degrees are shifted by $\ell$.  Let
\begin{equation}\label{skottavla}
0\to S_M\stackrel{a_M}{\to} \cdots \stackrel{a_1}{\to} S_0
\end{equation}
be a minimal graded free resolution  of $S/J_X$;
here  $S_k=S(-d_k^1)\oplus\cdots\oplus S(-d_k^{r_k})$
and the  mappings $a_k=(a_k^{ij})$ are 
matrices of homogeneous forms  in $\C^{N+1}$ with
$$
\deg a_k^{ij}= d_k^j-d_{k-1}^i.
$$ 
Since $J_X$ has pure dimension it follows from 
\cite[Corollary~20.14]{Eis}  that $M\le N$, 
see also \cite[Section~2.7]{AWsemester}.
Recall  that the {\it regularity} of $X$ is by definition the regularity of the ideal
$J_X$, so that 
\begin{equation}\label{badkar}
\reg X:=\max_{k,i} (d_k^i-k) +1,
\end{equation}
see, e.g., \cite[Ch.~4]{Eis2}.

Given polynomials  $F_1,\ldots, F_m$ on $V$ of degrees at most $d$, 
let $f_j$ be the corresponding
$d$-homogeneous forms on $\C^{N+1}$. That is, if $z=(z_0,\ldots,z_N)$ and $z'=(z_1,\ldots,z_N)$,
then $f_j(z)=z_0^d F(z'/z_0)$. As usual we can consider $f_j$ as sections of the restriction
to $X$ of the line bundle $\Ok(d)$ over $\Pk^N$. Let $Z_f$ be the common zero set of  the
$f_j$ on $X$. 

We now describe our main result in this paper.
Assume that  $\rho$ is an integer  that is larger than or equal to 
$d\min(m,n+1)+\reg X-1$,
and let $\phi$ be  a section of $\Ok(\rho)$ over $X$.  Let us assume in addition that $X$ is irreducible, cf., Remark~\ref{mora} below.
We then have an explicit division-interpolation  formula
\begin{equation}\label{divint}
\phi(z)=\sum_{j=1}^m f_j(z)\int_X \Ak^\rho_j(\zeta,z)\phi(\zeta)+
\int_X \Bk^\rho(\zeta,z)\w  R^f(\zeta)\w\omega(\zeta)\phi(\zeta),
\end{equation}
where,  for fixed $z$, the  integrals in the sum exist as  principal values
at $X_{sing}\cup Z_f$, $\Bk^\rho(\cdot,z)$ is a smooth form,  and $R^f\w\omega$ is a residue current on $X$ 
with support on $Z_f$, so that the second ``integral'' is the action of $R^f\w\omega$ 
on $\pm \Bk^\rho \phi$.
Moreover, both $\Ak^\rho_j$ and $\Bk^\rho$ are holomorphic (homogeneous polynomials
of degrees $\rho-d$ and $\rho$, respectively) in $z$.
For the precise formula, see Theorem~\ref{puma} below.

Assume that  $\Phi$ is a polynomial
of degree at most $\rho$ and let $\phi$ be its  $\rho$-homogenization.
If the current $R^f\w\omega\phi$ vanishes, i.e., $\phi$ annihilates $R^f\w\omega$,
it follows that
\begin{equation}\label{qprim}
Q_j(z')=\int_X \Ak^\rho_j(\zeta; 1,z')\phi(\zeta)
\end{equation}
are polynomials such that
$\deg F_jQ_j\le \rho$ and
$F_1Q_1+\cdots +F_mQ_m= \Phi$
on $V$.

\smallskip
Let us now present our main application of this formula.  
Let  $\J_f$ be the coherent analytic ideal sheaf over $X$ generated by $f_j$.
Furthermore, 
let $c_\infty$ be the maximal codimension of  the
so-called  {\it distinguished varieties} of the sheaf $\J_f$,
in the sense of Fulton-MacPherson,   
that are contained in 
$$
X_\infty:=X\setminus V,
$$
see, e.g., \cite[Section~5]{AWsemester}.
If there  are no distinguished varieties contained in  $X_\infty$,
then we interpret $c_\infty$ as $-\infty$.
We always have that 
$$
c_\infty\le\mu:=\min(m,n).
$$
Let $|F|^2=|F_1|^2+\cdots +|F_M|^2$.

\begin{thm}\label{hickelsats}
Assume that  $V$ is a reduced  $n$-dimensional algebraic subvariety  of $\C^N$, let $X$ be its closure in $\Pk^N$, and assume that $X$ is irreducible.

\smallskip
\noindent  (i)    There exists a number  $\mu_0$
with the following property: If $F_1,\ldots,F_m$ are polynomials of degree
$\le d$, $\Phi$ is a polynomial such that
\begin{equation}\label{kaka}
|\Phi|/|F|^{\mu+\mu_0} \ {\rm is\ locally\  bounded\  on} \ V,
\end{equation}
and
\begin{equation}\label{rodef}
\rho=\max \big(\deg \Phi + (\mu+\mu_0) d^{c_\infty} \deg X, d\min(m,n+1) +\reg X-1\big),
\end{equation}
then 
Eq.\ \eqref{qprim} defines explicit polynomials $Q_j$ such that
\begin{equation}\label{hummer}
\Phi=F_1Q_1+\cdots +F_mQ_m
\end{equation}
on $V$ and 
\begin{equation}\label{polupp}
\deg (F_j Q_j)\leq \rho 
\end{equation}

\smallskip
\noindent (ii) If in addition $V$ is smooth there is a number $\mu'$ with the following property: 
If $F_1,\ldots,F_m$ are polynomials of degree
$\le d$, $\Phi$ is a polynomial such that
\begin{equation}\label{kaka2}
|\Phi|/|F|^{\mu} \ {\rm is\ locally\  bounded\  on} \ V,
\end{equation}
and
\begin{equation}\label{rodef2}
\rho'=\max \big(\deg \Phi + \mu  d^{c_\infty}\deg X + \mu', d\min(m,n+1)+\reg X-1),
\end{equation}
then Eq.\ \eqref{qprim} defines explicit polynomials $Q_j$ 
such that  \eqref{hummer} holds on $V$ and 
\begin{equation}\label{polupp2}
\deg(F_j Q_j)\leq  \rho' 
\end{equation}
If $X$ is smooth one can take $\mu'=0$.
\end{thm}

\begin{remark} 
In \cite[Theorem A]{AWsemester} the degree estimates are slightly sharper; 
instead of $d\min(m,n+1)+\reg X-1$
in \eqref{polupp} and \eqref{polupp2} one has 
the number  $(d-1)\min(m,n+1)+\reg X$. 
\end{remark}

\begin{remark}\label{mora} 
Notice that if $V$ is irreducible in $\C^N$, then $X$ is irreducible.
In \cite[Theorem A]{AWsemester} there is no assumption about irreducibility of 
$X$.  Formula \eqref{divint}, as well as Theorem \ref{hickelsats}, 
hold without this assumption if we replace the bound $d\min(m,n+1)+\reg X-1$ by
$d\min(m,N+1)+\reg X-1$. The construction is precisely the same; we only use
the irreducibility assumption in Lemma \ref{skatan}, which gives 
rise to the slightly sharper bound.
\end{remark}

\begin{remark}\label{julle}
If we apply  Theorem~\ref{hickelsats} to  Nullstellensatz  data,
i.e., $F_j$ with no common zero on $V$
and $\Phi=1$, then we get polynomials $Q_j$ by  \eqref{qprim} such that  
 $\sum F_jQ_j=1$ on $V$
and 
$$
\deg(F_jQ_j)\le
\max \big((\mu+\mu_0) d^{c_\infty} \deg X, d\min(m,n+1) +\reg X-1 \big).
$$
It was proved by Jelonek, \cite{Jel}, that one can find a solution such that 
$$
\deg(F_jQ_j)\le c_m d^\mu\deg X,
$$
where $c_m=1$ if $m\le n$ and $c_m=2$ otherwise, and this result
is essentially optimal.
In general it  is clearly much sharper than what we get; however,
if  $c_\infty<\mu$ and $d$ is large enough, then our estimate may be 
sharper\footnote{The optimal result for case $V=\C^n$ was proved by Koll\'ar,  \cite{Koll}, for
$d\ge 3$; see  \cite{Sombra} and \cite{Jel} for $d=2$.}. 
\end{remark}

\smallskip
The main point in this paper is the construction of the formula \eqref{divint}
and it is performed step by step in Sections~\ref{formelsec}, \ref{gam}, and \ref{basic}.
In Section~\ref{hypers} we consider the special case when $X$ is a hypersurface
in $\Pk^{n+1}$. When reading the general construction it might be instructive to 
look into this section to see what each step boils down to in the hypersurface case. 
As soon as  formula \eqref{divint} is established, Theorem~\ref{hickelsats}
is a simple consequence of the main step  in proof of Theorem~A in \cite{AWsemester};
this is explained in  Section~\ref{ponton}.

\smallskip
\noindent
{\bf Acknowledgement}  We are grateful for the referee's careful reading and suggestions
to improve the presentation.

\section{Integral  representation on $\Pk^N$}\label{formelsec}

Let 
$$
\pi\colon\C^{N+1}\setminus\{0\}\to\Pk^N
$$
be the natural projection. Let $U$ be an open subset of $\Pk^N$. Recall that differential forms 
(currents) with values in
$\Ok(\ell)$ can be identified by 
$\ell$-homogeneous differential forms (currents)  $\xi$ on $\pi^{-1}U\subset \C^{N+1}\setminus\{0\}$ 
such that $\delta_\zeta \xi=\delta_{\bar \zeta}\xi=0$, where
$\delta_\zeta$ is interior multiplication by
$$
\sum_0^N \zeta_j\frac{\partial}{\partial \zeta_j},
$$
and $\delta_{\bar \zeta}$ is its conjugate.  
We will use  the norm $|\xi|_{\Ok(\ell)}=|\xi|/|\zeta|^\ell$.

We first recall from \cite{AG} how one can generate
representation formulas for  holomorphic sections of a  vector bundle  $F\to\Pk^N$.
The construction is an adaptation to $\Pk^N$ of the idea introduced  in 
 \cite{A1}; see also \cite{EG}.
Let $F_z$ denote the pull-back of $F$  to $\Pk^N_\zeta\times \Pk^N_z$ under the
natural projection $\Pk^N_\zeta\times \Pk^N_z\to \Pk^N_z$  
and define  $F_\zeta$  analogously.
Let $\delta_\eta$ denote contraction with the  
$\mathcal{O}_z(1) \otimes \mathcal{O}_\zeta (-1)$-valued holomorphic vector field 
$$
\eta = 2 \pi i \sum_0^N z_i \ddzi
$$
over $\Pk^N_\zeta\times\Pk^N_z$. We thus have the mapping
$$
\delta_\eta : \Cu_{\ell+1,q}(\mathcal{O}_\z(k) \otimes \mathcal{O}_z(j)) \to
  \Cu_{\ell,q}(\mathcal{O}_\z(k-1) \otimes \mathcal{O}_z(j+1)),
$$
where $\Cu_{\ell,q}(\mathcal{O}_\z(k) \otimes \mathcal{O}_z(j))$ denotes the
sheaf of currents on $\Pk^N_\zeta\times\Pk^N_z$
of bidegree $(\ell,q)$ in $\zeta$ and $(0,0)$ in $z$
that take  values in $\mathcal{O}_\z(k) 
\otimes \mathcal{O}_z(j)$. 
In this paper we only deal with  forms and currents with respect to
$\zeta$ and always consider  $z$ as a parameter, i.e., no differentials
with respect to $z$ occur.

Given a vector bundle $L\to\Pk^N_\zeta\times\Pk^N_z$, let
\[
\Lop^{\nu}(L)=
\bigoplus_{j}\Cu_{j,j+\nu}(\mathcal{O}_\z(j) \otimes \mathcal{O}_z(-j)\otimes  L).
\]
If $\nabla_\eta = \delta_\eta - \dbar$, 
where $\dbar=\dbar_\zeta$, 
then  $\nabla_\eta: \Lop^{\nu}(L) \to \Lop^{\nu+1}(L)$. Furthermore,
$\nabla_\eta$ is a anti-derivation,  and $\nabla_\eta^2 = 0$.

A  {\it weight} with respect to $F\to \Pk^N$  and a point\footnote{Usually
"$z$ in $\Pk^N$" means that $z\in\C^{N+1}\setminus\{0\}$ and the point in question
is $\pi(z)$ in  $\Pk^N$''.}
$z$ in  $\Pk^N$  is a section  $g$ of $\Lop^{0}(\Hom(F_\zeta,F_z))$ such that $\nabla_\eta g =
  0$, $g$ is smooth for $\zeta$ close to $z$, 
and  $g_{0,0} = I_F$ when $\zeta=z$, 
where $g_{0,0}$ denotes the component of  $g$ with bidegree $(0,0)$,
and $I_F$ is the identity endomorphism on $F$. 
The following basic formula appeared as Proposition~4.1 in 
\cite{AG}.

\begin{prop} \label{hatsuyuki}
Let $g$ be a weight with respect to $F\to\Pk^N$ and  $z$  and assume that
$\psi$ is  a holomorphic section of $F\otimes\mathcal{O}(-N)$.
We  then have  the representation formula
\begin{equation}\label{rep1}
\psi(z) = \int_{\Pk^N} g_{N,N} \psi. 
\end{equation}
\end{prop}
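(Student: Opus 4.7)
The plan is to set $G := g\psi$, verify that $G$ is $\nabla_\eta$-closed on $\P^N_\zeta$, and then convert $\int_{\P^N} G_{N,N}$ into an evaluation at $\zeta = z$ via a Bochner--Martinelli-type current identity on $\P^N$. To obtain $\nabla_\eta G = 0$: since $\psi$ is pulled back from $\P^N_z$, it is a $(0,0)$-form in $\zeta$, so trivially $\delta_\eta\psi = 0$ and $\dbar\psi = 0$, giving $\nabla_\eta\psi = 0$. Together with $\nabla_\eta g = 0$ and the antiderivation property of $\nabla_\eta$, this yields $\nabla_\eta G = 0$, which decomposes into the identities $\dbar G_{j,j} = \delta_\eta G_{j+1,j+1}$ for all $j \ge 0$.

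Next I would introduce a scalar Bochner--Martinelli form $B$ on $\P^N$ (with respect to $z$) satisfying, as currents on $\P^N_\zeta$,
$$
\nabla_\eta B = 1 - [\Delta_z],
$$
where $[\Delta_z]$ is the point mass at $\zeta = z$ regarded as an $(N,N)$-current. The standard construction is $B = s\wedge\sum_{k=0}^{N-1}(\dbar s)^k$, where $s$ is a smooth $(1,0)$-form in $\zeta$ valued in $\O_\zeta(1)\otimes\O_z(-1)$ with $\delta_\eta s = 1$ off the diagonal. Using $\nabla_\eta(\dbar s) = -\dbar(\delta_\eta s) = 0$ (which follows from $\dbar^2=0$ and the anticommutation of $\delta_\eta$ and $\dbar$ for holomorphic $\eta$) together with the antiderivation property, a short telescoping computation gives $\nabla_\eta B = (1-\dbar s)\sum_{k=0}^{N-1}(\dbar s)^k = 1 - (\dbar s)^N$; extending the singular top power $(\dbar s)^N$ across the diagonal as a principal-value current yields $[\Delta_z]$ by the classical Bochner--Martinelli residue identity.

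Granting this, multiplying by $G$ and using $\nabla_\eta G = 0$ with the Leibniz rule gives $\nabla_\eta(B\wedge G) = (\nabla_\eta B)\wedge G = G - [\Delta_z]\wedge G$. Extracting the $(N,N)$-component in $\zeta$ and integrating over $\P^N_\zeta$: the term $[\nabla_\eta(B\wedge G)]_{N,N}$ equals $\delta_\eta(B\wedge G)_{N+1,N} - \dbar(B\wedge G)_{N,N-1}$, of which the first piece vanishes because no nonzero form on $\P^N$ has $\zeta$-bidegree $(N+1,\cdot)$, and the second integrates to zero by Stokes on the compact manifold $\P^N$. On the other side only the $j=0$ summand of $G$ contributes at top bidegree: $[[\Delta_z]\wedge G]_{N,N} = [\Delta_z]\,G_{0,0}$, whose integral equals $G_{0,0}(z,z) = g_{0,0}(z,z)\psi(z) = \psi(z)$ by the normalization $g_{0,0}|_{\zeta=z} = I_F$. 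This yields \eqref{rep1}.

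The principal obstacle is the rigorous verification of the current identity $\nabla_\eta B = 1 - [\Delta_z]$ on $\P^N$: this amounts to the classical Bochner--Martinelli residue calculation in the projective setting, requiring a principal-value extension of $B$ across the diagonal in local coordinates and a careful check that the $\O_\zeta(k)\otimes\O_z(-k)$ twists are consistent with the homogeneities of $s$ and its wedge powers.
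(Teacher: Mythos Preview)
Your argument is essentially the paper's own: the paper quotes $\nabla_\eta B=1-[z]$ (your $[\Delta_z]$), forms $\nabla_\eta(g\wedge B)=g\wedge(1-[z])$, reads off the $(N,N)$-component, and integrates by Stokes after multiplying by $\psi$; you do the same with $G=g\psi$ in place of $g$, which is harmless since $\nabla_\eta\psi=0$. One small slip: $\psi$ in the integrand is $\psi(\zeta)$, not a pullback from $\P^N_z$; the conclusion $\nabla_\eta\psi=0$ still holds because $\psi$ is a holomorphic $(0,0)$-form in $\zeta$.
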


Fix $z$ and assume that  $g$ and $g'$ are weights with respect to $F$ and $\Ok(\ell)$ respectively. 
Then $g\w g'$ is a weight with respect to $F\otimes\Ok(\ell)$. In fact,
$\nabla_\eta(g\w g')=\nabla_\eta g\w g'+ g\w\nabla_\eta g'=0$, and
$(g\w g')_{0,0}=g_{0,0}g'_{0,0}=I_F \cdot 1=I_{F\otimes\Ok(\ell)}$.

\begin{ex}\label{alphaex}
Fix a point $z$. Notice that 
\[
\alpha = \alpha_{0,0} + \alpha_{1,1} := \frac{z \cdot \bar \z}{|\z|^2} - \dbar
\frac{\bar \z \cdot d\z}{2 \pi i |\z|^2}
\]
is a well-defined smooth form in $\Lop^0(\Hom(\Ok_\zeta(-1),\Ok_z(1)))$
such that 
\begin{equation}\label{alfasluten}
\nabla_\eta\alpha=0
\end{equation}
and $\alpha_{0,0}$ is equal to $1=I_{\mathcal{O}(1)}$ at $\zeta=z$.
Thus $\alpha$ is a weight with respect to $\Ok(1)$ and $z$.  If $g$ is a weight with respect
to  $F$, thus $g\w\alpha^\ell$ is a weight with respect to $F\otimes\Ok(\ell)$.
\end{ex}

For a given point $z\in\Pk^N$, 
$$
b=\frac{1}{2\pi i}\frac{|\zeta|^2\bar z\cdot d\zeta-(\bar z\cdot\zeta)\bar\zeta\cdot d\zeta}
{|\zeta|^2|z|^2-|\bar\zeta\cdot z|^2}
$$
is the $\Ok_\zeta(1)\otimes \Ok_z(-1)$-valued $(1,0)$-form
on $X\setminus\{z\}$
of minimal norm such that $\eta \cdot b=\delta_\eta b=1$. Let 
$$
B=\frac{b}{\nabla_\eta b}=b+ b\w\dbar b+ b\w(\dbar b)^2+\cdots +b
\w(\dbar b)^{N-1};
$$
here  $b\w(\dbar b)^{k-1}$ is an 
$\Ok_\zeta(k)\otimes \Ok_z(-k)$-valued $(k,k-1)$-form that is
$\Ok(1/d(\zeta,z)^{2k-1})$, where $d(\zeta,z)$ is the distance between
$\zeta$ and $z$ on $\Pk^N$.  In particular,  $B$ is locally
integrable at $z$ and can thus  be considered as a current on $X$.
Let $[z]$ denote the $\Ok_\zeta(N)\otimes \Ok_z(-N)$-valued
$(N,N)$-current point evaluation at $z$, i.e.,
$$
\int_{\Pk^N} [z] \xi=\xi(z)
$$
for each smooth $(0,0)$-from $\xi$ with values  in $\Ok(-N)$.
Then, see, e.g., \cite[formula (4.1)]{AG},
\begin{equation}\label{pelle}
\nabla_\eta B=1-[z]
\end{equation}
in the current sense.  

For future reference we recall from
\cite{AG} how \eqref{rep1}
follows from \eqref{pelle}:  Since $g$ is $\nabla_\eta$-closed,  
$\nabla_\eta(g\w B)=g\w(1-[z])$.  Identifying components of bidegree
$(N,N)$ we therefore get
$$
-\dbar(g\w B)_{N,N-1}=g_{N,N}-g_{0,0}[z]=g_{N,N}-I_{F_z}[z].
$$
Since $\psi$ is holomorphic, we thus have
$$
-d(g\w B\psi)_{N,N-1}=-\dbar(g\w B\psi)_{N,N-1}=g_{N,N}\psi-[z]\psi.
$$
Integrating over $\Pk^N$ now gives \eqref{rep1}.

\subsection{Division-interpolation formulas on $\Pk^N$}\label{krut}

Assume that 
\begin{equation}\label{ecomplex}
0\to E_M\stackrel{\vp_M}{\longrightarrow}\ldots\stackrel{\vp_3}{\longrightarrow} 
E_2\stackrel{\vp_2}{\longrightarrow}
E_1\stackrel{\vp_1}{\longrightarrow}E_0\to 0
\end{equation}
is a generically exact complex of Hermitian holomorphic vector bundles  over $\Pk^N$ and
let $Z$ be the projective variety  where \eqref{ecomplex}  is not pointwise exact. 
We introduce  a superbundle
structure on $E=\oplus E_k$ so that elements in $E^+=\oplus E_{2k}$ are even and elements
in  $E^-=\oplus E_{2k+1}$ are odd. Then $\vp=\oplus_k \vp_k\colon  E\to E$
is mapping of odd order since it maps $E^{\pm}\to E^{\mp}$.  We get an induced superbundle 
structure on the space of forms (and currents) with values in $E$ by just adding
the degrees (mod $2$). If $v$ is an odd section of $E$ and $\xi$ is a form, then
$v\xi=(-1)^{\deg \xi} \xi v$  etc.  Thus for instance
$\vp (\xi v)=(-1)^{\deg \xi} \vp v$.
It follows also that $\dbar$ is an odd mapping, and
since $\vp$ is holomorphic we have that $\dbar\circ \vp=-\vp\circ\dbar$.
See \cite{AG} and \cite[Section~5]{A7} for details. 

In \cite{A7} and \cite{AW1} were  introduced   currents 
$$
U=U_1+\ldots +U_N+U_{\min(M,N+1)}, \quad R=R_1+\ldots+ R_{\min(M,N+1)}
$$
associated to \eqref{ecomplex} with the following properties:
The current $U$ is smooth outside $Z$, $U_k$ are $(0,k-1)$-currents
that take values in $\Hom(E_0, E_k)$,  and
$R_k$ are  $(0,k)$-currents with support on $Z$, taking values in 
$\Hom(E_0, E_k)$. They satisfy the relations
$$
\vp_1 U_1=I_{E_0}, \quad  \vp_{k+1}U_{k+1}-\dbar U_k= -R_k,\ k\ge 1,
$$
which can be compactly written 
\begin{equation}\label{alban}
\nabla_\vp U=I_{E_0}-R
\end{equation}
if
$
\nabla_\vp=\vp-\dbar=\vp_1+\vp_2+\cdots \vp_N-\dbar.
$
Moreover, $R_k=0$ for $k<\codim Z$.
Let  
\begin{equation}\label{utsikt}
0\to \mathcal{O}(E_M)\stackrel{\vp_N}{\longrightarrow}\ldots\stackrel{\vp_3}{\longrightarrow} 
\mathcal{O}(E_2)\stackrel{\vp_2}{\longrightarrow}
\mathcal{O}(E_1)\stackrel{\vp_1}{\longrightarrow}\mathcal{O}(E_0)
\end{equation}
be the corresponding complex of locally free sheaves.
In this paper we will only consider $E_k$ that are direct sums of line bundles
$\Ok(\nu)$.
Let $E_{k}^j$ be disjoint trivial line bundles over $\Pk^N$ 
with basis elements $e_{k,j}$, and let
\begin{equation}\label{dirsum}
E_k=\big(E^1_k\otimes \mathcal{O}(-d^1_k)\big)\oplus\cdots \oplus 
\big(E^{r_k}_k\otimes \mathcal{O}(-d_k^{r_k})\big).
\end{equation}
Then 
$$
\vp_k=\sum_{ij} \vp_k^{ij}e_{k-1,i}\otimes e_{k,j}^*
$$
are matrices of homogeneous forms;  here $e_{k,j}^*$ are the dual basis elements,  and 
$$
\deg \vp_k^{ij}= d_{k-1}^i-d_k^j.
$$
We equip $E_k$ with the natural  Hermitian metric, i.e., such that 
\begin{equation}\label{norm}
|\xi(z)|^2_{E_k}=\sum_{j=1}^{r_k}|\xi_j(z)|^2 |z|^{2d^j_k},
\end{equation}
if  $\xi=(\xi_1,\ldots,\xi_{r_k})$.

\smallskip
If $\psi$ is  a holomorphic section of $\mathcal{O}(E_0)$ 
that annihilates $R$, i.e., the current $R\psi$ vanishes,  then $\psi$ is 
in the sheaf $\J=\Im \vp_1$, see \cite[Proposition~2.3]{AW1}.    
In order to represent the membership by an integral formula 
we will use a weight $g$ that contains $\vp_1(z)$ as a factor
and apply Proposition \ref{hatsuyuki}. 
To form such a weight we need  
a generalization to nontrivial  vector bundles, introduced in \cite{AG}
and inspired by 
 \cite{A7} and \cite{AW1},  of so-called Hefer forms.

\df{We say that $H= (H_k^\ell)$ is a Hefer morphism for the complex
  $E_\bullet$  in \eqref{ecomplex}  if $H_k^\ell$ are smooth sections of
\[
\Lop^{-k+\ell}(\Hom(E_{\zeta,k},E_{z,\ell}))
\]
that are holomorphic in $z$, 
$H_k^\ell=0$ for $k<\ell$, the term $(H_\ell^\ell)_{0,0}$ of bidegree $(0,0)$
is the identity $I_{E_\ell}$ on the diagonal $\Delta$, and
\begin{equation}\label{hrel}
\nabla_\eta H_k^\ell =H_{k-1}^\ell \vp_k -\vp_{\ell+1}(z) H_k^{\ell+1}, 
\end{equation}
where $\vp_k$ stands for  $\vp_k(\zeta)$.}

\smallskip

\begin{remark}
Notice that 
$H$ is even. In fact the term $H^\ell_k$  is a form of degree
$k-\ell$ (mod~$2$)  that takes values in $\Hom(E_\ell,E_k)$,
so that its total degree is $(k-\ell) +(k-\ell)=0$ (mod~$2$).
\end{remark}

We do not require $H$ to be holomorphic in $\zeta$.
Assume that $H$ is a Hefer morphism for $E_\bullet$ and let 
$U$ and $R$ be the associated currents. We can then form the
currents  $HU=\sum_j H^1_jU_j$ 
(notice the superscript $1$ here)  and $HR=\sum_j H^0_j R_j$.

\begin{prop}\label{burdus}
Assume that  $H$ is a Hefer morphism for the complex $E_\bullet$
and assume that $E_0$ is a line bundle.
If $\psi$ is a holomorphic section of $F\otimes E_0 \otimes \mathcal{O}(-N)$
and $g$ is a weight with respect to $F\to\Pk^N$,
then  we have  the representation
\begin{equation}\label{urtva}
\psi(z)=\vp_1(z)\int_{\Pk^N_\zeta}(HU\w g)_{N,N}\psi+
\int_{\Pk^N_\zeta}(HR\w g)_{N,N}\psi, \quad z\in\Pk^N.
\end{equation}
\end{prop}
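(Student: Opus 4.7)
The plan is to follow the route outlined in the paragraph immediately preceding the statement: build a weight $g^\lambda$ out of the regularized currents $U^\lambda$, $R^\lambda$ and the Hefer morphism $H$, apply Proposition~\ref{hatsuyuki} to the product weight $g^\lambda\wedge g$, and then analytically continue in $\lambda$ and evaluate at $\lambda=0$.

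First I would check that, for $\Re\lambda\gg 0$, the expression $g^\lambda=f_1(z)HU^\lambda+HR^\lambda$ given in \eqref{regul} really is a weight with respect to $E_0\to\P^N$ at the point $z$. The target bundle matches because $H^1_k\circ U^\lambda_k$ sends $E_{\zeta,0}$ through $E_{\zeta,k}$ up to $E_{z,1}$, whence $f_1(z)$ lands it in $E_{z,0}$, while $\sum H^0_k R^\lambda_k$ maps $E_{\zeta,0}$ directly into $E_{z,0}$. Smoothness near $\zeta=z$ is automatic since $U^\lambda$ and $R^\lambda$ are smooth for $\Re\lambda$ large, and the normalisation $(g^\lambda)_{0,0}|_{\zeta=z}=I_{E_0}$ follows from $(H^0_0)_{0,0}|_\Delta=I_{E_0}$ and the fact that the $(0,0)$-components of $HU^\lambda$ and of the $\dbar|f_1|^{2\lambda}\wedge U$ piece of $R^\lambda$ vanish on the diagonal.

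The algebraic core is the verification $\nabla_\eta g^\lambda=0$. Since $U^\lambda$ has bidegree $(0,\ast)$ in $\zeta$, one has $\delta_\eta U^\lambda=0$ and therefore $\nabla_\eta U^\lambda=-\dbar U^\lambda$, which by the defining relation $\nabla_f U^\lambda=I_{E_0}-R^\lambda$ equals $fU^\lambda-I_{E_0}+R^\lambda$; similarly for $R^\lambda$. Expanding $\nabla_\eta(f_1(z)HU^\lambda+HR^\lambda)$ as an anti-derivation and feeding in the Hefer identity \eqref{hrel}, namely $\nabla_\eta H^\ell_k=H^\ell_{k-1}f_k-f_{\ell+1}(z)H^{\ell+1}_k$, the pieces involving $f_k(\zeta)$ cancel telescopically against the contributions from $\nabla_f U^\lambda$, while the $f_{\ell+1}(z)$ terms combine with the explicit factor $f_1(z)$ in front of $HU^\lambda$ so that everything collapses to zero. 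The careful bookkeeping is exactly as in \cite[Section~5]{A7} with the superbundle sign convention on $E=\oplus E_k$, and this is the only step with any real content; everything else is a mechanical assembly.

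Tensor products of weights being weights, $g^\lambda\wedge g$ is a weight for $F\otimes E_0$. Proposition~\ref{hatsuyuki} then yields, for $\Re\lambda\gg 0$,
\[
\psi(z)=\int_{\P^N_\zeta}(g^\lambda\wedge g)_{N,N}\psi.
\]
The left-hand side is independent of $\lambda$; the right-hand side admits an analytic continuation to $\Re\lambda>-\epsilon$ because $U^\lambda$ and $R^\lambda$ do, by the results from \cite{AW1} recalled just before \eqref{regul}. Setting $\lambda=0$ gives $U^\lambda\to U$ and $R^\lambda\to R$, so that $g^\lambda\wedge g$ tends to $f_1(z)HU\wedge g+HR\wedge g$ and the formula splits into the two summands of \eqref{urtva}. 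The main obstacle, as indicated, is the sign-careful verification of $\nabla_\eta g^\lambda=0$; the rest of the proof is standard once that identity is in place.
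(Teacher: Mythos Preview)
Your proposal is correct and follows exactly the route the paper takes: show that $g^\lambda=f_1(z)HU^\lambda+HR^\lambda$ is a weight for $E_0$ when $\Re\lambda\gg 0$, apply Proposition~\ref{hatsuyuki} to the product weight $g^\lambda\wedge g$, and analytically continue to $\lambda=0$. The paper itself does not spell out the verification of $\nabla_\eta g^\lambda=0$ or the normalisation (it simply refers to \cite{AG}), so your sketch in fact supplies more detail than the text; one small inaccuracy is that the $(0,0)$-component of $f_1(z)HU^\lambda$ does not vanish on the diagonal but rather contributes $|f_1|^{2\lambda}I_{E_0}$, which combines with the $(1-|f_1|^{2\lambda})I_{E_0}$ from $HR^\lambda$ to give the required identity.
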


This proposition is the same as Proposition~4.2 in \cite{AG}
except for the additional weight $g$ which is easily dealt with. 
However, for future reference we will discuss a
proof, slightly different from the proof given in \cite{AG}.

\begin{proof}
Let $h$ be a non-trivial holomorphic section, of some Hermitian vector bundle, that vanishes on $Z$. 
If $\Re\lambda \gg 0$,  then
$$
U^\lambda:=|h|^{2\lambda} U,    \quad R^{\lambda}:=1-|h|^{2\lambda} + \dbar|h|^{2\lambda}
\w U
$$
are  (quite)  smooth forms.  Moreover  they admit  current-valued
analytic continuations to $\Re\lambda>-\epsilon$, and
the values at $\lambda=0$ are $U$ and $R$, respectively, see, e.g., \cite[Section~2]{AW1}. 
It is readily checked that 
\begin{equation}\label{skrut}
\nabla_\vp U^\lambda=I_{E_0}-R^\lambda.
\end{equation}

\begin{lma}\label{pluto}
The form 
\begin{equation}\label{regul}
g^\lambda:=\vp_1(z)HU^\lambda+HR^\lambda
\end{equation}
is a weight with respect to $E_0$ as long as $\Re\lambda\gg 0$.  
\end{lma}

\begin{proof} We must verify that $\nabla_\eta g^\lambda=0$ and that $g_{0,0}^\lambda=I_{E_0}$.
The analogous statements for $\lambda=0$ is part of Proposition~4.2 in \cite{AG}. The
verification there only relies on the equality \eqref{skrut}  for $\lambda=0$, 
i.e., \eqref{alban}, and
so  the same argument proves Lemma~\ref{pluto}.
\end{proof}

If now $g$ is a weight with respect to $F$, then  $g^\lambda\w g$
is a weigh with respect to $E_0\otimes F$. In view of Proposition~\ref{hatsuyuki}
we thus have the representation
$$
\psi(z)=\int_{\Pk^N} (g^\lambda\w g)_{N,N} \psi=
\vp_1(z)\int_{\Pk^N_\zeta}(HU^\lambda\w g)_{N,N}\psi+
\int_{\Pk^N_\zeta}(HR^\lambda\w g)_{N,N}\psi
$$
for a holomorphic section $\psi$ of $F\otimes E_0\otimes\Ok(-N)$ and $\Re\lambda\gg 0$.
However, both terms on the right hand side admit analytic continuations to 
$\Re\lambda>-\epsilon$, and taking $\lambda=0$ we get Proposition~\ref{burdus}.  
\end{proof}


Now assume that $g$ depends holomorphically on $z$.
If $R\psi=0$ we then get from Proposition \ref{burdus} the  explicit holomorphic solution
$$
q(z)=\int_{\Pk^N_\zeta}(HU\w g)_{N,N}\psi
$$
to $\vp_1 q=\psi$.  Moreover, if $\psi$ a~priori is only defined in a \nbh of $Z$, 
then the second integral   in
\eqref{urtva} is well-defined and provides a global holomorphic section $\tilde\psi$ such that
$\tilde\psi-\psi$ belongs to the ideal sheaf generated by $\vp_1$.  This is the reason for
the notion  division-interpolation formula.  For a more precise interpolation result,
see Proposition \ref{primat} below.

\begin{remark}\label{mainformel2}
In \cite{AW1} more general currents
$U^\ell_k$ and $R^\ell_k$,  taking values in $\Hom(E_\ell,E_k)$,
occur.  
With the same argument as above we get the more general formula
\begin{multline*}
\psi(z)=\vp_{\ell+1}(z)\int_{\Pk^N_\zeta}(H U^{\ell}\w g)_{N,N}\psi+
\int_{\Pk^N_\zeta}(H R^\ell\w g)_{N,N}\psi+\\
\int_{\Pk^N_\zeta}(H U^{\ell-1}\w g)_{N,N}  \vp_\ell\psi
\end{multline*}
for holomorphic sections of $F\otimes E_\ell\otimes\mathcal{O}(-N+\ell)$;
here $HR^\ell=\sum_jH^\ell_j R^\ell_j$ and $HU^\ell=\sum_j H^{\ell+1}_j U^\ell_j$.
If  $\vp_\ell\psi=0$ and $R^\ell\psi=0$ 
we thus get  an explicit  holomorphic solution to $\vp_{\ell+1}q=\psi$. 
\end{remark}

\subsection{A choice of Hefer morphism}\label{hefersubsec}

Assume now that $E_\bullet$ is a complex with $E_k$ of the form \eqref{dirsum}
and choose $\kappa$ such that $\kappa\ge d^i_k$ for all $i,k$. We shall 
construct a Hefer morphism  for the complex $E_{\bullet}\otimes\Ok(\kappa)$.

From the complex $E_\bullet$ we form a complex $E'_\bullet$ of trivial bundles over $\C^{N+1}$
in the following way: Let  $E'_k:=E_k^1\oplus\cdots\oplus E_k^{r_k}$ and take
the mappings $\vp'_k$ that are formally just the matrices $\vp_k$ of homogeneous forms.
Let  $\delta_{w-z}$ denote interior multiplication with
$$
2\pi i\sum_0^N (w_j-z_j)\frac{\partial}{\partial w_j}
$$
in $\C^{N+1}_w\times\C^{N+1}_z$.

\begin{prop} \label{kowalski}
There exist $(k-\ell,0)$-form-valued 
mappings 
$$
h_k^\ell=\sum_{ij}(h_k^\ell)_{ij} e_{\ell i} \otimes e_{k j}^\ast :
\C^{n+1}_w\times\C^{n+1}_z\to\Hom(E'_k , E'_\ell),
$$
such that 
\be \label{fraser}
h_k^\ell = 0,  \ \text{for}\ k<\ell,  \ h_\ell^\ell = I_{E'_\ell}, \ 
-\delta_{w-z} h_k^\ell =h_{k-1}^\ell \vp'_k(w) - \vp'_{\ell+1}(z) h_k^{\ell+1},
\ee
and the coefficients in the form 
$(h_k^\ell)_{ij}$ are homogeneous polynomials of degree
$d_{k}^j-d_{\ell}^i-(k-\ell)$. 
\end{prop}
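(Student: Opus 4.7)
The plan is to construct the $h_k^\ell$ by induction on the difference $p := k - \ell$, exploiting the exactness of the Koszul complex of the regular sequence $w_0 - z_0, \ldots, w_N - z_N$ in $\C[w, z]$. For the base case $p = 0$, set $h_\ell^\ell = I_{E'_\ell}$; this is a $(0,0)$-form, and \eqref{fraser} at $k = \ell$ is vacuous since by convention $h_k^{\ell'}$ vanishes when $k < \ell'$.

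For the inductive step, suppose $h_{k'}^{\ell'}$ has been constructed with the correct homogeneity whenever $k' - \ell' < p$. Set
\[
\omega := h_{k-1}^\ell F_k(w) - F_{\ell+1}(z) h_k^{\ell+1},
\]
a $(p-1, 0)$-form in $dw$ with polynomial coefficients in $w, z$. Applying $\delta_{w-z}$ and substituting the inductive relations for $\delta_{w-z} h_{k-1}^\ell$ and $\delta_{w-z} h_k^{\ell+1}$, together with the complex identities $F_{k-1} F_k = 0$ and $F_{\ell+1} F_{\ell+2} = 0$ and the sign conventions of the super-structure on $\Hom(E'_\bullet, E'_\bullet)$ from \cite[Section~5]{A7}, gives $\delta_{w-z} \omega = 0$. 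For $p = 1$, $\omega$ reduces to the polynomial matrix $F_{\ell+1}(w) - F_{\ell+1}(z)$, which vanishes on the diagonal and hence lies entry-wise in the ideal $(w_\nu - z_\nu)$; any standard entrywise Hefer splitting then produces $h_{\ell+1}^\ell$. For $p \geq 2$, the exactness of the Koszul complex of $w - z$ over $\C[w, z]$ in positive degree yields a polynomial $(p, 0)$-form $h_k^\ell$ with $-\delta_{w-z} h_k^\ell = \omega$.

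To arrange the prescribed homogeneity, grade $\C[w, z]$ by total degree and assign each $dw_\nu$ weight $1$, so that $\delta_{w-z}$ preserves total degree and the Koszul complex is exact as a complex of graded modules. The inductive hypothesis makes each component $\omega_{ij}$ homogeneous of total degree $d_k^j - d_\ell^i$, so the preimage $(h_k^\ell)_{ij}$ may be chosen homogeneous of the same total degree, i.e.\ with coefficient polynomial of degree $d_k^j - d_\ell^i - p$, as desired. The main delicate point is the sign bookkeeping in the verification that $\delta_{w-z} \omega = 0$; beyond that the construction is the standard homological-algebra lift along the Koszul resolution of the diagonal.
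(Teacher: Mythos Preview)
Your argument is correct. The inductive construction via exactness of the Koszul complex of the regular sequence $w_0-z_0,\ldots,w_N-z_N$ in $\C[w,z]$ is the standard way to produce such Hefer forms, and your verification that $\delta_{w-z}\omega=0$ (using the inductive relations together with $F_{k-1}F_k=0$ and $F_{\ell+1}F_{\ell+2}=0$) is exactly the right cocycle check; the homogeneity is then obtained, as you say, by working in the graded Koszul complex.

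The paper, by contrast, gives no self-contained argument: it simply cites an \emph{explicit} formula from \cite[Section~4]{A7} that already produces forms $h_k^\ell$ satisfying \eqref{fraser}, and then observes that since $\delta_{w-z}$ is homogeneous of degree~$0$, the components of the prescribed degrees again satisfy \eqref{fraser} (or, alternatively, that the explicit formula visibly has the right degrees). What your approach buys is self-containment and a clear explanation of why such forms exist; what the paper's approach buys is an explicit closed formula, which is occasionally useful downstream (e.g.\ for the Koszul complex one gets the concrete $(\delta_h)_{k-\ell}$ used in Section~\ref{basic}). The two are really the same construction viewed at different levels of abstraction: the explicit formula in \cite{A7} arises from the canonical Koszul homotopy, which is precisely the explicit inverse that realizes the exactness you invoke.
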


In \cite[Section~4]{A7} there is an explicit
formula that provides  $h^\ell_k$ such that
\eqref{fraser} holds\footnote{The initial minus sign here is because
we have $\delta_{w-z}$ rather than $\delta_{z-w}$ as in \cite{A7}}.  
The  components of these forms of the desired
degrees then must satisfy \eqref{fraser} as well.  One can
check that the formula actually gives the desired forms directly. 
Notice that  
\[
\gamma_j=d\zeta_j-\frac{\bar\zeta\cdot d\zeta}{|\zeta|^2}\zeta_j, \quad j=1,\ldots,N,
\]
are projective forms such that 
\begin{equation}\label{olle2}
\nabla_\eta \gamma_j=2\pi i(z_j-\alpha \zeta_j),
\end{equation}
where $\alpha$ is the form in Example~\ref{alphaex}.

If $h(w,z)$ is a homogeneous form in $\C^{N+1}_w\times\C^{N+1}_z$ with differentials $dw$ 
and polynomial  coefficients, we let $\tau^*h$ be the projective 
form obtained by replacing $w$ by $\alpha\zeta$ and $dw_j$ by $\gamma_j$.
(Notice that thus $\tau^*$ does {\it not} necessarily preserve bidegree).
We then have 
\begin{equation}\label{tau}
\nabla_\eta \tau^*h=\tau^*(\delta_{w-z}h),
\end{equation}
in light of (\ref{olle2}) and \eqref{alfasluten}.

\begin{prop}[\cite{AG}, Proposition~4.4]\label{trams}
Assume that $\kappa\ge d_k^j$ for all $k$ and $j$. Then 
\begin{equation}\label{skola}
(H^E_\kappa)^\ell_k=\sum_{ij}(\tau^\ast h_k^\ell)_{ij}\w \alpha^{\kappa-d_k^j}
e_{\ell,i}\otimes e^*_{k,j}
\end{equation}
is a  Hefer morphism for the complex 
$E_\bullet\otimes\mathcal{O}(\kappa)$. 
\end{prop}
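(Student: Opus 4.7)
The plan is a direct verification of the defining properties of a Hefer morphism for $E_\bullet\otimes\Ok(\kappa)$: that $(H^E_\kappa)^\ell_k$ lives in the correct bundle, vanishes for $k<\ell$, equals the identity on the diagonal in bidegree $(0,0)$, and satisfies the commutation relation \eqref{hrel}. The machinery combines three ingredients: the algebraic identity \eqref{fraser} satisfied by $h^\ell_k$, the closedness $\nabla_\eta\alpha=0$ from \eqref{alfasluten}, and the substitution formula \eqref{tau}.

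First I would check the structural properties. The vanishing $(H^E_\kappa)^\ell_k=0$ for $k<\ell$, together with smoothness and $z$-holomorphy, follow directly from the corresponding properties of $h^\ell_k$, $\alpha$ and $\gamma_j$. For the bundle value, a degree count suffices: $(h^\ell_k)_{ij}$ is polynomial of total bidegree $d^j_k-d^i_\ell-(k-\ell)$ carrying $(k-\ell)$ factors of $dw$; under $\tau^\ast$ each $w$ is replaced by $\alpha\zeta$, converting a $w$-degree into a $z$-degree through the factor $\alpha$, while each $dw_j$ becomes a projective $(1,0)$-form $\gamma_j$ of $\zeta$-homogeneity $1$. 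Combining with the explicit factor $\alpha^{\kappa-d^j_k}$ then places $(H^E_\kappa)^\ell_k$ in
\[
\Lop^{-k+\ell}\bigl(\Hom(E_{\zeta,k}\otimes\Ok(\kappa),\, E_{z,\ell}\otimes\Ok(\kappa))\bigr).
\]
The diagonal normalisation follows from $h^\ell_\ell=I_{E'_\ell}$ and $\alpha_{0,0}|_\Delta=1$, which together force the $(0,0)$-component of $(H^E_\kappa)^\ell_\ell|_\Delta$ to be the identity on $E_\ell\otimes\Ok(\kappa)$.

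The core step is the Hefer identity. Since $\nabla_\eta$ is an anti-derivation and $\nabla_\eta\alpha=0$, induction gives $\nabla_\eta\alpha^p=0$ for all $p\ge 0$, so
\[
\nabla_\eta\bigl((\tau^\ast h^\ell_k)_{ij}\w\alpha^{\kappa-d^j_k}\bigr)=(\nabla_\eta\tau^\ast h^\ell_k)_{ij}\w\alpha^{\kappa-d^j_k}.
\]
Applying \eqref{tau} and then \eqref{fraser} produces two contributions, one from $\tau^\ast(h^\ell_{k-1}F_k(w))$ and one from $\tau^\ast(F_{\ell+1}(z)h^{\ell+1}_k)$. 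Since $F_k^{pj}$ is $w$-homogeneous of degree $d^j_k-d^p_{k-1}$ while $F_{\ell+1}(z)$ has no $w$, the substitution yields
\[
\tau^\ast F_k^{pj}(w)=\alpha^{d^j_k-d^p_{k-1}}f_k^{pj}(\zeta),\qquad \tau^\ast F_{\ell+1}^{ip}(z)=f_{\ell+1}^{ip}(z).
\]
The telescoping identity $\alpha^{d^j_k-d^p_{k-1}}\w\alpha^{\kappa-d^j_k}=\alpha^{\kappa-d^p_{k-1}}$ regroups the first contribution entry-by-entry into the matrix product $(H^E_\kappa)^\ell_{k-1}f_k(\zeta)$, while the second is already of the form $f_{\ell+1}(z)(H^E_\kappa)^{\ell+1}_k$. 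Matching signs coming from \eqref{fraser} and from the superbundle grading then yields \eqref{hrel}.

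The main obstacle will be the careful bookkeeping of line-bundle twists, $\alpha$-exponents and super-signs in the passage between the matrix identity \eqref{fraser} on $\C^{N+1}_w\times\C^{N+1}_z$ and its $\tau^\ast$-image on $\P^N_\zeta\times\P^N_z$; once this is in place the proposition reduces to the projective avatar of \eqref{fraser}, with the factor $\alpha^{\kappa-d^j_k}$ serving passively to absorb the twist by $\Ok(\kappa)$.
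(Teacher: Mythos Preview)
Your verification is correct and follows the natural route: check the bundle degrees, the vanishing and diagonal conditions, and then derive \eqref{hrel} from \eqref{fraser} via \eqref{tau} and $\nabla_\eta\alpha=0$, using the telescoping $\alpha^{d_k^j-d_{k-1}^p}\w\alpha^{\kappa-d_k^j}=\alpha^{\kappa-d_{k-1}^p}$ to reassemble $(H^E_\kappa)^\ell_{k-1}f_k(\zeta)$. Note, however, that the present paper does not give a proof of this proposition at all; it is quoted from \cite[Proposition~4.4]{AG}, so there is nothing here to compare your argument against beyond confirming that what you wrote is exactly the direct check that reference carries out.
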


\begin{remark}\label{polly}
For degree reasons it is enough that $\kappa\ge d_k^j$ for 
$k\le\min(M,N+1)$. 
\end{remark}

It follows from the definition, cf., \cite{A7} or \cite{AW1},  that
the currents $U$ and $R$ that are associated to $E_\bullet$ are
also the currents associated to $E_{\bullet}\otimes\Ok(\kappa)$. 
From Propositions~\ref{trams} and \ref{burdus} we thus  get a division formula 
for sections $\psi$ of $E_0\otimes\Ok(\kappa-N)$.   

\section{Integral representation on  $X$}
\label{gam}

Let $i\colon X\to \Pk^N$ be a projective subvariety of pure dimension $n$. We shall now describe
how one can obtain intrinsic weighted representation formulas on $X$. With
an appropriate choice of weight we will finally get the desired  division formula in
Section~\ref{basic}.

Let $\E^{\Pk^N}_{\ell,q}$ denote  the sheaf of smooth $(\ell ,q)$-forms on  $\Pk^N$ and define
the sheaf $\E^X_{\ell,q}:=\E^{\Pk^N}_{\ell,q}/\Kers i^*$ of smooth forms on $X$,
where 
$\Kers i^*$ is the subsheaf of $\E^{\Pk^N}_{\ell,q}$ of forms whose pullback to $X_{reg}$ vanish. One can
show that $\E^X_{\ell,q}$ is an intrinsic sheaf over $X$,  that is, independent of the choice of embedding
in a smooth manifold.  We define the sheaf of currents of bidegree $\Cu^X_{n-\ell,n-q}$ on $X$ as the dual
of $\E^X_{\ell,q}$.  This means that the elements  $\tau\in \Cu^X_{n-\ell,n-q}$ precisely correspond
to the currents  $\hat\tau\in\Cu^{\Pk^N}_{N-\ell,N-q}$ such that $\hat\tau.\xi=0$ for $\xi\in\Kers i^*$.
It is natural to think of $\hat\tau$ as the push-forward $i_*\tau$ of $\tau$.

Let $\J_X\subset\Ok^{\Pk^N}$ be a coherent ideal sheaf associated with $X$.
From the free resolution \eqref{skottavla} of $S/J_X$ (of minimal length $M\le N$)
we can form  the 
locally free sheaf complex  $\Ok(E_\bullet)$ as in \eqref{utsikt},  defined by 
\begin{equation}\label{prost}
E_k=\big(E^1_k\otimes \mathcal{O}(-d^1_k)\big)\oplus\cdots \oplus
\big(E^{r_k}_k\otimes \mathcal{O}(-d_k^{r_k})\big),
\end{equation}
where $E_k^i$ are disjoint trivial line bundles, and the mappings $a_j$
are (formally) the same mappings as in \eqref{skottavla}.
It turns out, see, e.g., \cite[Section~6]{AW1},  that \eqref{prost} is actually
a resolution of the sheaf $\Ok^{\Pk^N}/\J_X$.
Let $R$ and $U$ be the associated currents as above and recall that
$R=R_p+R_{p+1}+\cdots $, where
$p:=N-n$.

Notice that   
$$
\varOmega := \delta_\zeta \big(d\zeta_0\w\cdots\w d\zeta_{N+1}\big)=
\sum (-1)^j\zeta_j d\zeta_0\w\ldots \w\widehat{d\zeta_j}\w\ldots\w d\zeta_N
$$
is a non-vanishing section of the trivial bundle over $\Pk^N$ realized as an
$(N,0)$-form with values in $\Ok(N+1)$.
It follows from \cite[Proposition~3.3]{AS} that  there is a unique 
current $\omega=\omega_0+\cdots+\omega_n$, where  $\omega_k$ has bidegree  
$(n,k)$ and takes values in $E_k\otimes\Ok(N+1)$, such that
\begin{equation}\label{deffo}
i_*\omega=\varOmega\w R, \quad  i_*\omega_\ell=\varOmega\w R_{p+\ell}.
\end{equation}
The current $\omega$  is called a {\it structure form} for $X$ in \cite{AS}.
It is smooth on $X_{reg}$, cf.,  \cite[Proposition~3.3]{AS} , and moreover we have:

\begin{lma}\label{snoklus}
Let $\chi(t)$ be a smooth function on $[0,\infty)$ that is $0$ close to $0$ and $1$ for large $t$.
If $\xi$ is a test form and  $h$ is a holomorphic
section of a Hermitian vector bundle that
does not vanish identically  on any irreducible component of
$X$,  then 
\begin{equation}\label{apfot}
\int_X\omega\w\xi=\lim_{\epsilon\to 0}\int_X \chi(|h|^2/\epsilon) \omega\w \xi.
\end{equation}
\end{lma}

Here the integrals denote the action of $\omega$ on test forms.
The lemma is one way to express that $\omega$ has the so-called {\it standard extension property}.

If we choose
$h$ that vanishes on $X_{sing}$, then $ \chi(|h|^2/\epsilon) \omega$ is smooth
for each $\delta>0$, and thus \eqref{apfot} defines
$\omega$ as a principal value current.

\begin{proof}
From \cite[Proposition~3.3]{AS} we know 
that $\omega$  is {\it almost semi-meromorphic} on $X$, cf.,  \cite[Definition~2.5]{AS}.
It follows from  \cite[Eq.\ (2.6)]{AS} that an alomst semi-meromorphic current 
has the standard extension property
according to the definition on p.\ 269 in \cite{AS}.  
From \cite[Eq.\ (2.3)]{AS}, and the discussion preceding
this formula the statement in the lemma now follows. 
\end{proof}

\begin{remark} 
Locally one can find a free resolution of $\Ok^{\Pk^N}/\J_X$  that ends at level
$N-1$. Then the top degree term of the associated residue current $R$ is $R_{N-1}$ and  therefore
the associated local structure form in \cite[Proposition~3.3]{AS} has top degree term 
$\omega_{n-1}$.  
When the residue current is constructed from a global resolution we usually get a term  $R_N$, 
and hence also a top term $\omega_n$ of our
global structure form. However, it
follows from the proof of \cite[Proposition~3.3]{AS} that $\omega_n$ carries no additional singularitites;
it is just a smooth form times $\omega_{n-1}$.
\end{remark}

For any smooth form $\xi$ on $\Pk^N$  there is a unique
form  $\sko(\xi)$ such that 
\begin{equation}\label{0skunk}
\sko(\xi)\w\varOmega=\xi_{N,*},
\end{equation}
where $\xi_{N,*}$ is the component of $\xi$ of bidegree $(N,*)$.
From \eqref{deffo} and  \eqref{0skunk}  we have that
\begin{equation}\label{skunk}
\xi_{N,*}\w R=\sko(\xi)\w\varOmega\w R=i_*\big(\sko(\xi)\w\omega\big),
\end{equation}
where we in the last term, for simplicity, write $\sko(\xi)\w\omega$ rather then
$i^*\sko(\xi)\w\omega$.

\smallskip

Let 
\begin{equation}\label{kappanoll}
\kappa_0:=\max_{i,k} d_k^i=\max_{k\le M, i} d_k^i.
\end{equation}
From Proposition~\ref{trams} we have a Hefer  morphism
$H^E_{\kappa_0}$ for the complex $E_\bullet\otimes\Ok(\kappa_0)$. 

\begin{prop}\label{primat}
Let $\ell$ be any integer and assume  that $g$ is a smooth weight on $\Pk^N$ with respect to
$\Ok(\ell-\kappa_0+N)$ and   $z\in X$. For 
holomorphic sections  $\phi$ 
of $\Ok(\ell)$ over $X$  we  have the representation
\begin{equation}\label{jupiter}
\phi(z)=\int_X \sko(g\w H^E_{\kappa_0})\w\omega \phi.
\end{equation}
\end{prop}

\begin{proof}
In view of Lemma~\ref{pluto} and Proposition~\ref{trams}, recalling that $a_j$ are the mappings in the complex defined by \eqref{prost}, 
$$
g^\lambda=a_1(z)H^E_{\kappa_0}U^\lambda+H^E_{\kappa_0} R^\lambda
$$ 
is a smooth weight in $\Pk^N$ with respect to $\Ok(\kappa_0)$ and $z$
if $\Re\lambda\gg 0$. Take $z\in  X$.  Then $a_1(z)=0$ and thus
\begin{equation}\label{kork}
g^\lambda=H^E_{\kappa_0} R^\lambda.
\end{equation}
Let $\Phi_0$ be a  smooth  global section of $\Ok(\ell)$ 
that is equal to $\phi$ on $X$ and holomorphic in a \nbh (in $\Pk^N$) of $z$. 
Since $B$ is smooth outside $z$, $\nabla_\eta B=1$ there and $\dbar\Phi_0=0$ in 
a \nbh (in $\Pk^N$) of $z$,  it follows that 
$$
\Phi:=\Phi_0-\dbar\Phi_0\w B
$$
is a smooth $\nabla_\eta$-closed section of $\Lop^0(\Ok_\zeta(\ell)\otimes\Ok_z(0))$
on  $\Pk^N$.  
Thus 
$$
\nabla_\eta(g^\lambda\w g\w \Phi \w B)=g^\lambda\w 
g\w \Phi(1-[z])=
g^\lambda\w g\w \Phi-(g^\lambda\w g\w \Phi )_{0,0}\w [z].
$$
Notice that $(g^\lambda\w g\w \Phi)_{0,0}$ is equal to $\phi(z)$ at the point $z$.
By a similar argument as in the discussion preceding
Section~\ref{krut} we get the representation
$$
\phi(z)=\int_{\Pk^N}g^\lambda\w g\w \Phi.
$$
This is essentially Proposition \ref{hatsuyuki}, except for that $\Phi$ is not holomorphic
but merely $\nabla_\eta$-closed.
Taking $\lambda=0$ we get
\begin{equation}\label{lollipop}
\phi(z)=\int_{\Pk^N}H^E_{\kappa_0} R\w g\w \Phi= 
\int_X\sko(g\w H^E_{\kappa_0})\w\omega \phi,
\end{equation}
cf., \eqref{skunk} and \eqref{kork},  since $i^*\Phi=\phi$.  
\end{proof}

\begin{ex}\label{tusentalet}
If $\ell\ge\kappa_0-N$, then we can take $g=\alpha^{\ell-\kappa_0+N}$
in Proposition~\ref{primat}, if $\alpha$ is the form in Example \ref{alphaex}. 
Since this weight is holomorphic for all  $z\in\Pk^N$,
the integral in \eqref{jupiter} then provides a holomorphic extension to 
$\Pk^N$ of $\phi$. We thus get an ``explicit'' proof of the surjectivity
of  the natural restriction  mapping
$$
\Gamma(\Pk^N,\Ok(\ell))\to \Gamma(X,\Ok(\ell))
$$
for  $\ell\ge \kappa_0-N$.
\end{ex}

For future reference, notice, cf., \eqref{badkar} and \eqref{kappanoll}, that  
\begin{equation}\label{kappanollett}
\kappa_0-N\le \reg X-1.  
\end{equation}

\section{Division formulas on $X$}\label{basic}

Let $f_1,\ldots,f_m$ be our 
sections of $\Ok(d)$ on $X$ from Section~\ref{intro}
and let $\J_f$ be the associated ideal sheaf on $X$.
If $X$ is smooth we can find
a resolution of $\Ok^X/\J_f$ over $X$ and obtain 
a residue current whose annihilator is precisely  $\J_f$.
We can then form a division-interpolation
formula like \eqref{divint},  if $\rho$ is big enough, 
such that the residue term vanishes as soon as $\phi$ is in $\J_f$. 
However, our objective is to construct an explicit representation
of the solutions in Theorem~\ref{hickelsats}, and to this end we use
the Koszul complex generated by the $f_j$.
In this way we get a residue current that is explicitly defined and, moreover,
perfectly adapted to that theorem.

\smallskip
Let $E'$ be a trivial rank $m$ bundle with basis elements $e_1,
\ldots, e_m$, let $E:=E'\otimes\Ok(-d)$, and let 
$e_i^*$ be the dual basis elements for $(E')^*$ so that
$f:=f_1e_1^*+\cdots+f_me_m^*$ is a section of $E^*=\Ok(d)\otimes (E')^*$.
We then have the  Koszul complex
\begin{multline}\label{staty}
0\to \Ok(-md)\otimes\Lambda^m E'\stackrel{\delta_f}{\to}\cdots \\
\cdots\stackrel{\delta_f}{\to}\Ok(-2d)\otimes\Lambda^2E'\stackrel{\delta_f}{\to}
\Ok(-d)\otimes E'\stackrel{\delta_f}{\to} \C\to 0,
\end{multline}
where $\delta_f$ denotes\footnote{These mappings $\delta_f$ are thus instances  of
the mappings  $\vp_j$ in  Sections~\ref{krut} and
\ref{hefersubsec}.} contraction by $f$.
If we let $E_k:=\Ok(-dk)\otimes \Lambda^kE'$ we thus have a complex
like \eqref{ecomplex} and the associated currents 
$U=U^f$ and $R=R^f$. Let us describe them in more detail.
In $\Pk^N\setminus \{f=0\}$ we 
define the section
$$
\sigma=\sum_{j=1}^m\frac{\overline{f_j(z)} e_j}{|f(z)|^2}
$$
of $E$.
If  $\bar f\cdot e:=\sum \bar f_j e_j$ and $d\bar f\cdot e:=\sum d\bar f_j\w e_j$, then
$$
\sigma\w(\dbar\sigma)^{k-1}=\frac{\bar f\cdot e\w (d\bar f\cdot e)^{k-1}}{|f|^{2k}}.
$$
It turns out, cf., the proof of Proposition~\ref{burdus} in Section~\ref{krut} 
and \cite[Example~1]{AG}, that 
\begin{equation}\label{oskar}
U^{f,\lambda}:=|f|^{2\lambda}_{E^*}
\sum_{k=1}^{m} \frac{\bar f\cdot e\w (d\bar f\cdot e)^{k-1}}{|f|^{2k}}
\end{equation}
and 
$$
R^{f,\lambda}:=1-|f|^{2\lambda}_{E^*}+\dbar |f|^{2\lambda}_{E^*}\w
\sum_{k=1}^{m} \frac{\bar f\cdot e\w (d\bar f\cdot e)^{k-1}}{|f|^{2k}}
$$
are  smooth currents for $\Re\lambda\gg 0$ and 
$$
R^f= R^{f,\lambda}|_{\lambda=0}, \quad U^f=U^{f,\lambda}|_{\lambda=0}.
$$

From  Section~\ref{hefersubsec}, cf., Remark~\ref{polly},  we know that
there is a Hefer morphism  $H^{f}_{\kappa}$ for the complex \eqref{staty}
if $\kappa\ge\kappa'$, where 
$$
\kappa':=d\min(m, N+1),
$$
and by Lemma~\ref{pluto}, 
\begin{equation}\label{snor}
g^{f,\lambda}_{\kappa}:=f(z)\cdot H^{f}_{\kappa} U^{f,\lambda}
+H^{f}_{\kappa}R^{f,\lambda}
\end{equation}
is then a smooth global weight on $\Pk^N$  with respect to $\Ok(\kappa)$ 
 when $\Re\lambda \gg 0$.

\begin{ex} 
Let us describe an explicit choice of  $H^{f}_{\kappa}$.
Let $ \tilde h_j(w,z)$ be $(1,0)$-forms in $\C^{n+1}\times\C^{n+1}$ 
of polynomial degrees $d-1$ such that
$$
\delta_{w-z} \tilde h_j=f_j(w)-f_j(z)
$$
and let 
$
h_j=\tau^*\tilde h_j$.  
Let
$$
h:=h_1\w e_1+\cdots + h_m\w e_m.
$$
If $\delta_h$ is interior multiplication by $h$ 
and 
$$
(\delta_h)_k:=(\delta_h)^k/k!,
$$
then we can take
\begin{equation}\label{luma}
(H^f_{\kappa})^\ell_k=\alpha^{\kappa-dk}(\delta_h)_{k-\ell},
\end{equation}
see \cite[Section~5]{AG}. 
\end{ex}

If now $\phi$ is a section of $\Ok(\rho)$ over $X$ 
and $\hat g$ is a weight on  $\Pk^N$ with  respect to
$\Ok(\rho-\kappa'-\kappa_0+N)$ and $z\in X$ it follows from Proposition~\ref{primat} that 
\begin{equation}\label{urskata0}
\phi(z)=\int_X \sko(g^{f,\lambda}_{\kappa'}\w \hat g\w H^E_{\kappa_0})\w\omega\phi.
\end{equation}
In particular, we can choose $\hat g$ that is holomorphic in $z$
if $\rho\ge \kappa'+\kappa_0-N$, cf., Example~\ref{tusentalet}.  

If $X$ is irreducible we can actually improve $\kappa'$ to 
$$
\kappa_1:=d\min(m,n+1).
$$
Notice that for fixed $z$ the scalar term in the form $\alpha$ is non-vanishing outside $\zeta=z$.  
Thus  $\alpha^{-1}$ is  well-defined there, and so we can define
$g^{f,\lambda}_{\kappa}$ by \eqref{snor} and  \eqref{luma} for any $\kappa$ there. 

\begin{lma}\label{skatan}
The pullback to $X$ of  $g^{f,\lambda}_{\kappa_1}$ is smooth on $X$.
Assume that $X$ is irreducible.
If  $g$ is a weight with respect to  $\Ok(\rho-\kappa_1-\kappa_0+N)$
and $\phi$ is a holomorphic section of $\Ok(\rho)$ over $X$, then
we have the representation
\begin{equation}\label{styrke}
\phi(z)=\int_X \sko(g^{f,\lambda}_{\kappa_1}\w g\w H^E_{\kappa_0})\w\omega\phi, \quad z\in X.
\end{equation}
\end{lma}

\begin{proof}
Notice that terms of $g^{f,\lambda}_{\kappa}$ of
bidegree $(s,s)$,   $s>n$,  vanish on $X$.
Recall that in the definition \eqref{snor}, 
 $(H^f_\kappa)^\ell_\bullet$ occurs with  $\ell=1$ in the first term and with 
 $\ell=0$  in the second one. Thus the ``worst'' component
of \eqref{luma} that occurs in   $g^{f,\lambda}_{\kappa_1}$ and does not vanish
on $X$, is when $\ell=1$ and $k=n+1$.

Fix  $z\in X$.  Notice that the section  
$h(\zeta)=\zeta\cdot\bar z/|z|$ of $\Ok(1)$ is non-vanishing at $\zeta=z$.
Let $\chi(t)$ be a cutoff function as in Lemma~\ref{snoklus} and let 
$$
\chi_\delta:=\chi(|h|^2/\delta)=\chi\big(|\bar\zeta\cdot z|^2/|z|^2|\zeta|^2\delta\big).
$$
Here $|h|=|h|_{\Ok(1)}$ denotes the natural norm as a the section of $\Ok(1)$, cf., \eqref{norm}. 
For small $\delta$, $\dbar\chi_\delta$ vanishes in a \nbh of $z$, and thus
\begin{equation}\label{lilian}
g^\delta:=\chi_\delta-\dbar\chi_\delta\w B
\end{equation}
is a smooth weight with respect to $z$, cf., \eqref{pelle}. 
Since  $g^\delta$ vanishes in a \nbh of the hyperplane
$h=0$  it follows that
$$
\alpha^{-r}\w g^\delta
$$
is a smooth weight with respect to $\Ok(-r)$ and $z$ for any $r$. Notice that
$$
g^{f,\lambda}_{\kappa'}=\alpha^{\kappa'-\kappa_1}\w g^{f,\lambda}_{\kappa_1}.
$$
Thus
we can choose  $\hat g=\alpha^{-r}\w g^\delta \w g$
in \eqref{urskata0} with $r=\kappa'-\kappa_1$ and so we get 
\begin{equation}\label{urskog}
\phi(z)=\int_X \sko(g^{f,\lambda}_{\kappa_1}\w g^\delta\w g\w H^E_{\kappa_0})\w\omega\phi.
\end{equation}

Since $h\neq 0$ at $z$ and $X$ is assumed to be irreducible
$h$ is non-vanishing generically on $X$.   
From Lemma~\ref{snoklus} it follows that 
\begin{equation}\label{ratta1}
\chi_\delta \omega\to \omega
\end{equation}
when $\delta\to 0$. We also know from \cite[Proposition~3.3]{AS} 
(recall that $a$ denotes the mappings in Section~\ref{gam} that define
the resolution of $\Pk^N/\J_X$) 
then $\nabla_a \omega=0$. Therefore
\begin{equation}\label{ratta2}
-\dbar\chi_\delta\w\omega=\nabla_a (\chi_\delta\w\omega)\to
\nabla_a\omega=0.
\end{equation}
In view of \eqref{ratta1}, \eqref{ratta2} and \eqref{lilian}, keeping in mind that
$g^{f,\lambda}_{\kappa_1}$ is smooth,  we  get \eqref{styrke} from
\eqref{urskog} when $\delta\to 0$.
\end{proof}

Notice that $\sko$ only acts on factors with
holomorphic  differentials so that $R^{f,\lambda}$ (and $U^{f,\lambda}$) 
can be put outside the brackets in \eqref{styrke}.

One can define the (formal)  product currents 
$$
U^{f}\w\omega:=U^{f,\lambda}\w\omega\big|_{\lambda=0},
\quad  
R^{f}\w\omega:=R^{f,\lambda}\w\omega|_{\lambda=0},
$$
on $X$, see  \cite[Sections~2.7 and 2.5]{AWsemester}.
The first one is a product of two almost semi-meromorphic currents,
cf., \cite[Definition 2.5]{AS}, 
and it is robust in the sense that 
\begin{equation}\label{kabel}
U^{f}\w\omega=\lim_\delta \chi_\delta U^f\w \omega
\end{equation}
if $\chi$ is a function as in Lemma~\ref{snoklus},
$\chi_\delta=\chi(|h|^2/\delta)$  and $h$ is a holomorphic section
that is generically non-vanishing on $X$.  This follows from
\cite[Proposition~2.7]{AS} combined with \cite[Eqs.\ (2.2) and (2,3)]{AS}.  
In particular we can choose a section $h$ that vanishes on $Z_f\cup X_{sing}$;
recall that  $Z_f$ is the zeroset of $f$ on  $X$.  Then
$\chi_\delta U^f\w \omega$ is smooth for $\delta>0$ and thus
$U^f\w\omega$ is a principal value current.

Putting $\lambda=0$ in \eqref{styrke} we  get the following 
more precise form of \eqref{divint} in the introduction.

\begin{thm}\label{puma}
Assume that $X$ is irreducible. Let $f_1,\ldots, f_m$ be sections of $\Ok(d)$.  
If 
$$
\rho= d\min(n,m+1)+\kappa_0-N +\ell=\kappa_1+\kappa_0-N+\ell, \quad  \ell\ge 0,
$$
and  $\phi\in\Gamma(X,\Ok(\rho))$
we have the interpolation-division formula
\begin{equation}\label{divint2}
\phi(z)=f(z)\cdot \int_X \Ak^\rho(\zeta,z)\phi(\zeta)+
\int_X \Bk^\rho(\zeta,z)\w  R^f(\zeta)\w\omega(\zeta)\phi(\zeta),
\end{equation}
where
$$
\Ak^\rho(\zeta,z):=\sko\big(\alpha^\ell\w H^f_{\kappa_1}\w H^E_{\kappa_0}\big)
\w U^f\w\omega
$$
and
$$
\Bk^\rho(\zeta,z):=\sko\big(\alpha^\ell\w H^f_{\kappa_1}\w H^E_{\kappa_0}\big).
$$
\end{thm}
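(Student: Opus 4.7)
The plan is to invoke the lemma preceding the theorem with a specific choice of the free weight $\tilde g$, then decompose the Koszul weight $g^{f,\lambda}_\kappa$ into its two natural pieces, and finally pass to the limit $\lambda\to 0$.

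First I would set $\tilde g = \alpha^\ell$. Because $\alpha$ is a weight for $\Ok(1)$ by Example~\ref{alphaex}, and wedge products of weights are weights for the tensor product bundles, $\alpha^\ell$ is a weight for $\Ok(\ell)$; the identity $\rho = \kappa+\kappa_0-N+\ell$ with $\ell\ge 0$ makes this precisely the class of weights the lemma demands. Moreover $\alpha$ is holomorphic in $z$, which will carry over to the desired holomorphic dependence of $A(\zeta,z)$ and $B(\zeta,z)$ on $z$. Substituting into \eqref{styrke} gives
$$\phi(z) = \int_X \sko\bigl(g^{f,\lambda}_\kappa \w \alpha^\ell \w H^E_{\kappa_0}\bigr)\w \omega\phi.$$

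Next I would insert the defining decomposition $g^{f,\lambda}_\kappa = f(z)\cdot H^f_\kappa U^{f,\lambda} + H^f_\kappa R^{f,\lambda}$ and split the integral. Since $f(z)$ does not depend on $\zeta$, it factors out of the first term. Then, using the observation already recorded in the text that $\sko$ only acts on factors carrying holomorphic differentials, the pure $(0,*)$-currents $U^{f,\lambda}$ and $R^{f,\lambda}$ may be pulled outside the $\sko$ bracket, subject to the sign conventions of the superbundle grading from \cite[Section~5]{A7}. This rewrites the expression as
$$f(z)\cdot\int_X \sko\bigl(\alpha^\ell\w H^f_\kappa\w H^E_{\kappa_0}\bigr)\w U^{f,\lambda}\w\omega\phi \; + \; \int_X \sko\bigl(\alpha^\ell\w H^f_\kappa\w H^E_{\kappa_0}\bigr)\w R^{f,\lambda}\w\omega\phi,$$
which is already the formula \eqref{divint2} up to replacing the product currents by their $\lambda$-regularizations.

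Finally I would let $\lambda\to 0$. The products $U^f\w\omega$ and $R^f\w\omega$ are defined in \cite{AS} and \cite{AWpart1} as the values at $\lambda=0$ of the analytic continuations of $U^{f,\lambda}\w\omega$ and $R^{f,\lambda}\w\omega$, and their construction is robust, so the limit can be taken against the smooth test form $\sko(\alpha^\ell\w H^f_\kappa\w H^E_{\kappa_0})$. This identifies the limits of the two integrals with the kernels $A$, $B$ and the current $R = R^f\w\omega$ exactly as claimed. The main step requiring care is the commutation of $\sko$ past the regularized currents together with the accompanying superbundle sign bookkeeping; beyond that the proof is a direct algebraic identification of terms combined with the limit theory for almost semi-meromorphic currents from \cite{AS}.
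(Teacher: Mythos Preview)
Your proposal is correct and follows essentially the same route as the paper: start from \eqref{styrke} with $\tilde g=\alpha^\ell$, split $g^{f,\lambda}_\kappa$ into its $U$- and $R$-parts, pull $U^{f,\lambda}$ and $R^{f,\lambda}$ past $\sko$ (the paper records exactly this observation just before the theorem), and then set $\lambda=0$ using the product currents $U^f\w\omega$, $R^f\w\omega$ from \cite{AS} and \cite{AWpart1}. There is no substantive difference in strategy.
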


As before $\alpha$ is the form in Example \ref{alphaex}. 
From above it is clear that $\Ak^\rho(\zeta,z)$ is an $m$-tuple that is 
holomorphic in $z\in X$ (with values in $\Ok(\rho-d)$),
and that, for fixed $z$, it is the product
of the principal value current $U^f\w\omega$ and a smooth form.  
Moreover, $\Bk^\rho(\zeta,z)$
is smooth and holomorphic in $z\in X$ (with values in $\Ok(\rho)$).
If $R^f\w\omega \phi=0$, then the second integral in \eqref{divint2} vanishes and so
we get a solution $q$ to   $f\cdot q=\phi$, where the $m$-tuple $q$ is given by the principal value
integral
$$
q(z)= \int_X \Ak^\rho(\zeta,z)\phi(\zeta).
$$
Recall that $(H^f_\kappa)^\ell_\bullet$ occurs
in \eqref{divint2} with $\ell=1$ in the first integral and with $\ell=0$  in the
second one. In  view of \eqref{oskar} and the special choice of Hefer
morphism \eqref{luma} 
we have in particular that 
\begin{equation}\label{hurra3}
q=\sum_{k=1}^{\min(m,n+1)}
\int_X\sko\Big[\alpha^{\rho-\kappa_0+N-dk}\w (\delta_{h})_{k-1} 
\frac{\bar f\cdot e\w(d\bar f\cdot e)^{k-1}}
{|f|^{2k}}\w H^E_{\kappa_0}\Big]\w \omega\phi,
\end{equation}
where the integral is a principal value at  $Z_f\cup X_{sing}$. 


\begin{remark}
If $m\le n$ and  $|\phi|\le |f|^{\min(m,n)}$, then 
$U^f\phi$ is  integrable  on $X_{reg}$ so 
\eqref{hurra3}  is  a convergent integral locally on $X_{reg}$.
In general,
however, $U^f\phi$ may be  a distribution of  higher order than zero,
and then the integral in \eqref{hurra3} must be regarded as a principal value
even at $Z_f$ on $X_{reg}$.
\end{remark}

\begin{remark}
If $X$ is smooth, then $\omega$ is smooth as well.
If  in addition $\codim Z_f=m$, then 
$R^f\w\omega\phi=0$ if and only if
$\phi$ is in the sheaf $\J_f$, see, e.g.,  \cite{AW1}. 
Moreover, $R^f=R^f_m$ 
coincides with the classical Coleff-Herrera product
$$
\dbar\frac{1}{f_m}\w\ldots\w\dbar\frac{1}{f_1}
$$
on $X$, cf., \cite{A3} p.\ 112. 
\end{remark}

\section{Proof of Theorem~\ref{hickelsats}}\label{ponton}
The key step in the proof of  Theorem~A
in \cite{AWsemester} is the following result, which follows from 
the proof of Theorem~A in  \cite[Section~6]{AWsemester}:

\begin{prop}\label{orangutang} 
Let $V$ be a reduced $n$-dimensional algebraic subvariety of $\C^N$ and let
$X$ be its closure in $\P^N$. 
There is a number $\mu_0$, only depending on $V$, such that the following
holds: If $F_j$ are polynomials of degree at most $d$ and $\Phi$ is a polynomial such that
\eqref{kaka} holds, $f_j$ are the $d$-homogenizations of $F_j$,
$\rho\ge  \deg \Phi + (\mu+\mu_0) d^{c_\infty} \deg X$, 
and $\phi$ is the $\rho$-homogenization
of $\Phi$, then $R^f\w\omega\phi=0$.

Assume that $V$ is smooth. There is a number  $\mu'$ such that if 
$F_1,\ldots,F_m$ are polynomials of degree
$\le d$, $f_j$ are the $d$-homogenizations of $F_j$,
$\Phi$ is a polynomial such that \eqref{kaka2} holds, 
$\rho\ge \deg \Phi + \mu  d^{c_\infty}\deg X + \mu'$, 
and $\phi$ is the $\rho$-homogenization
of $\Phi$, then $R^f\w\omega\phi=0$.
If $X$ is smooth one can take $\mu'=0$. 
\end{prop}

Now Theorem~\ref{hickelsats} follows with the same constants $\mu_0$ and $\mu'$. 
In fact, consider the formula \eqref{divint2} with  $\rho$ as in \eqref{rodef}, 
which is an allowed choice in view of \eqref{kappanollett}. 
Assume that $F_j$ and $\Phi$ are such that
\eqref{kaka} holds and let $\phi$ be the $\rho$-homogenization of $\Phi$.
By Proposition~\ref{orangutang}, 
 $R^f\w\omega\phi=0$, and hence
$$
\phi=f(z)\cdot\int \Ak^\rho(\zeta,z).
$$
By dehomogenization we get the desired representation of the membership.
Part (ii) of  Theorem~\ref{hickelsats} follows in the same way by taking
$\rho'$ as in \eqref{rodef2}.
It is clear from these arguments that one can replace the number $\reg X-1$ by 
$\kappa_0-N$ in Theorem~\ref{hickelsats}.

\section{The case when $X$ is a (reduced) hypersurface}
\label{hypers}

In this section we illustrate the results in the special case when $X$ is
a reduced hypersurface  in $\Pk^{n+1}$.
We thus assume that $X=\{a=0\}$ where $a$ is a section of $\Ok(\kappa_0)$
in $\Pk^{n+1}$,
i.e., $a=a(\zeta_0,\ldots,\zeta_{n+1})$ is a $\kappa_0$-homogeneous
polynomial
in $\C^{n+2}$, and $d a\neq 0$ on (the pull-back to $\C^{n+2}$ of) $X$.
We first discuss  the general representation formula
in Proposition~\ref{primat}  in this case.  It can of course be
obtained from  this proposition  but we find it instructive
to derive it directly from
the general representation formula (Proposition~\ref{hatsuyuki}) on $\Pk^{n+1}$. 

Recall that $\|a\|:=|a|/|\zeta|^{\kappa_0}$ is the natural pointwise norm of $a$ considered
as a section of $\Ok(\kappa_0)$. It is well-known that
$\dbar\|a\|^{2\lambda}/a$, a ~priori defined for $\Re\lambda \gg 0$, admits a current-valued
analytic continuation to $\Re\lambda>-\epsilon$, and that the value at $\lambda=0$
is $\dbar(1/a)$, i.e., $\dbar$ applied to the principal value current $1/a$.

\begin{remark}\label{plast}
We have the locally free sheaf resolution
\begin{equation} 0\rightarrow\mathcal{O}(-\kappa_0)\stackrel{a}{\rightarrow}\Ok(0)
\end{equation}
of  $\mathcal{O}^{\mathbb{P}^N}/\mathcal{J}_X$, and
it is readily checked that the associated residue current $R^E$ is
the current $\dbar(1/a)$ 
times a homomorphism (from $\Ok(0)$ to $\Ok(-\kappa_0)$) that has odd order. 
However we will derive the
representation  formula on $X$ in a more direct way 
and then we do not have to bother about these sign problems. 
Notice that the number $\kappa_0$ here is consistent with the definition in the general case.
Also notice that $\reg X= d_1^1-1 +1=\kappa_0$, cf., \eqref{badkar}.  
\end{remark}

Let $h^a(w,z)$ be a $(1,0)$-form 
such that $\delta_{w-z} h^a=a(z)-a(w)$ on $\C^{n+2}\times\C^{n+2}$, and 
let us write $h^a(\alpha\zeta,z)$ for $\tau^*h$, cf, Section~\ref{hefersubsec}.
If $\Re\lambda \gg 0$, then
$$
g^\lambda:=\alpha^{\kappa_0}-\nabla_\eta\big(h^a(\alpha\zeta,z)\|a\|^{2\lambda}/a\big)
$$
is a weight in $\Pk^{n+1}$ with
respect to $\Ok(\kappa_0)$ and  $z$. If $\phi$ is a holomorphic section
of $\Ok(\ell)$ and $g$ is a weight with respect to
$\Ok(\ell-\kappa_0+n+1)$, then 
we have from Proposition~\ref{hatsuyuki}, with $F=\Ok(n+1+\ell)$, the representation
$$
\phi(z)=\int_{\Pk^{n+1}} g^\lambda\w g \phi.
$$
Since
$$
-\nabla_\eta h^a(\alpha\zeta,z)=a(z)-a(\alpha\zeta)=a(z)-\alpha^{\kappa_0}a(\zeta),
$$
we have that
$$
g^\lambda=(1-\|a\|^{2\lambda})\alpha^{\kappa_0}+\frac{a(z)}{a(\zeta)}\|a\|^{2\lambda}+
h^a(\alpha\zeta,z)\w \dbar\|a\|^{2\lambda}/a.
$$
Notice that
$$
\int (1-\|a\|^{2\lambda})\alpha^{\kappa_0}\w g\phi
$$
vanishes when $\lambda=0$ by the dominated convergence theorem.
Let us now assume that $z\in X$  so that $a(z)=0$.
We then have 
\begin{equation}\label{ug}
\phi(z)=\int_{\Pk^{n+1}} g\w h^a(\alpha\zeta,z)\w\dbar\frac{1}{a}\phi, \quad z\in X.
\end{equation}
Arguing as in the proof of Proposition~\ref{primat} 
it is enough to assume that $\phi$ a~priori is defined on $X$.

We  want to write the right hand side  in \eqref{ug} as a principal
value integral over   $X$.   As before, let 
$$
\varOmega:=\delta_\zeta d \zeta,
$$
where
$$
d\zeta:=d\zeta_0\wedge\ldots\wedge d\zeta_{n+1}.
$$
Recall, cf., \eqref{deffo},
that the form $\omega$ on $X$ is defined by the equality\footnote{Since $R^E$ has even degree
this is consistent  with \eqref{deffo},
cf. Remark~\ref{plast}.}
\begin{equation}\label{alla}
i_* \omega =\dbar(1/a)\wedge\varOmega.
\end{equation}
We shall give  an explicit representation of $\omega$.
Let
$$
\partial_ja=\frac{\partial a}{\partial \zeta_j},
\ j=0,\ldots,n+1, \quad |\partial a|^2=|\partial_0a|^2+\cdots+|\partial_{n+1}a|^2,
$$
let $\delta_{A}$ denote interior multiplication by
$$
2\pi i\frac{1}{|\partial a|^2}\sum_{j=0}^{n+1} \overline{\partial_ja}\partial_j,
$$
and define
\begin{equation}\label{glass}
\omega':=\delta_{A}\varOmega=\delta_{A}\delta_\zeta d\zeta.
\end{equation}

\begin{lma} \label{strut}
For any test form  $\xi$ we have that
\begin{equation}\label{postbud}
\int_{\Pk^{n+1}}\xi\w \dbar\frac{1}{a}\w\varOmega=\int_X
\xi\w\omega'. 
\end{equation}
\end{lma}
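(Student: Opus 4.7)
The plan is to reduce the identity to the Poincar\'e-Lelong formula on $\P^{n+1}$, after expressing $\varOmega$ in terms of $da$ and $\omega'$. I would first establish the algebraic identity
\[
da \w \omega' \;=\; 2\pi i\, \varOmega - \kappa_0\, a\cdot \delta_A(d\zeta)
\]
on the open set where $|\partial a|^2 \neq 0$; this region contains $X$, since $da\ne 0$ on $X$ by hypothesis. The derivation uses the Leibniz rule for interior multiplication with the $(1,0)$-vector field $A$: combining $\delta_A(da) = A\cdot a = 2\pi i$ (by the very construction of $A$) with $\delta_A \varOmega = \omega'$ from \eqref{glass}, one obtains
\[
\delta_A(da \w \varOmega) \;=\; 2\pi i\, \varOmega - da \w \omega'.
\]
On the other hand, expanding $da\w\delta_\zeta d\zeta$ term by term and invoking Euler's identity $\sum_j \zeta_j\partial_j a = \kappa_0 a$ (valid since $a$ is $\kappa_0$-homogeneous) yields $da \w \varOmega = \kappa_0\, a\cdot d\zeta$, hence $\delta_A(da \w \varOmega) = \kappa_0\, a\cdot \delta_A(d\zeta)$. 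Comparing the two expressions for $\delta_A(da\w\varOmega)$ gives the claimed identity, and in particular $da \w \omega' = 2\pi i\, \varOmega$ along $X$.

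The second step is to multiply this identity by $\dbar(1/a)$ and observe that the correction term involving $a$ is killed by the current identity $a\cdot \dbar(1/a)=0$, which is immediate from $\dbar(a\cdot 1/a)=\dbar(1)=0$ together with the holomorphicity of $a$. This leaves
\[
\dbar\frac{1}{a} \w \varOmega \;=\; \frac{1}{2\pi i}\, \dbar\frac{1}{a} \w da \w \omega'
\]
as currents in a neighborhood of $X$, where $\omega'$ is smooth. Applying the Poincar\'e-Lelong formula $\frac{1}{2\pi i}\dbar(1/a) \w da = [X]$ (which is available since $da\ne 0$ on $X$), pairing with the test form $\xi$, and unwinding the action of the integration current $[X]$ on the smooth form $\xi\w\omega'$, one obtains
\[
\int_{\P^{n+1}} \xi \w \dbar\frac{1}{a} \w \varOmega \;=\; \int_X \xi \w \omega',
\]
which is the desired equality.

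The only delicate point is the $2\pi i$ normalization throughout, which is forced by the paper's convention of absorbing this factor into the vector fields $\eta$ and $A$; once that is accounted for, the remaining manipulations are routine.
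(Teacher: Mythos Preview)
Your proof is correct and follows essentially the same route as the paper: both establish an identity of the form $(\text{connection form of }a)\wedge\omega' = 2\pi i\,\varOmega + (\text{multiple of }a)$, kill the $a$-term against $\dbar(1/a)$, and conclude via Poincar\'e--Lelong. The only cosmetic difference is that the paper works with the projective Chern-connection form $Da = da - \kappa_0\,\frac{\bar\zeta\cdot d\zeta}{|\zeta|^2}\,a$ in place of your $da$, which makes the descent from $\C^{n+2}$ to $\P^{n+1}$ manifest but is otherwise equivalent since $Da-da$ is a multiple of $a$.
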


In view of \eqref{alla} we thus have that
$$
\omega=i^*\omega'.
$$

\begin{proof}
Notice that 
$$
Da:=da-\kappa_0\frac{\bar \zeta\cdot d \zeta}{|\zeta|^2}a
$$
is the Chern connection on $\Ok(\kappa_0)$ acting on  $a$. One can verify directly
that $Da$ is a projective form, since by the $\kappa_0$-homogeneity of $a$,
$\partial_0a\zeta_0+\cdots+\partial_{n+1}a\zeta_{n+1}=\kappa_0a$.
In any case,  $\delta_\zeta(Da)=0$ so we have  
\begin{multline*}
Da\w\omega'=Da\w \delta_{A}\delta_\zeta d\zeta =\delta_\zeta(Da\w \delta_{A} d\zeta)=
\delta_\zeta(\delta_A Da\w d\zeta)=\\
2\pi i\Big(1-\kappa_0\frac{\overline{\partial a}\cdot\bar\zeta}{|\partial a|^2|\zeta|^2}a\Big) 
\varOmega.
\end{multline*}
In particular,
$$
Da\w \omega'=2\pi i\varOmega
$$
on $X$.  By the Poincar\'e-Lelong lemma,
$$
\dbar\frac{1}{a}\w Da=2\pi i[X],
$$
and therefore
$$
2\pi i\dbar\frac{1}{a}\w\varOmega=\dbar\frac{1}{a}\w Da\w\omega'=
2\pi i[X]\w\omega'
$$
which is the same as \eqref{postbud}.
\end{proof}

If $\xi$ has bidegree $(n+1,n)$, then by \eqref{glass}, 
$$
\delta_A\xi=\delta_A(\vartheta(\xi)\w\varOmega)=(-1)^n\vartheta(\xi)\w\omega'.
$$
Moreover, 
$$
\vartheta(\xi)\w\varOmega\w\dbar\frac{1}{a}=(-1)^{n+1}\vartheta(\xi)\w\dbar\frac{1}{a}\w\varOmega.
$$
From \eqref{ug}  and Lemma~\ref{strut} we  thus get

\begin{prop}
If $\phi$ is a holomorphic section of $\Ok(\ell)$ 
and $g$ is a weight with respect to $\Ok(\ell-\kappa_0+n+1)$, 
then we have the representation
\begin{equation}\label{tulpan}
\phi(z)=(-1)^{n+1} \int_X\sko(g\w h^a(\alpha\zeta,z))\w\omega' \phi=
-\int_X\delta_A(h^a\w g)\phi.
\end{equation}
\end{prop}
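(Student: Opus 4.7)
The plan is to convert the ambient representation \eqref{ug} into an integral over $X$ via Lemma~\ref{strut}, keeping careful track of signs. The integral in \eqref{ug} picks out the bidegree $(n+1,n+1)$-component of the integrand, and since $\dbar(1/a)$ is of bidegree $(0,1)$, it is the $(n+1,n)$-part of $g\w h^a(z,\alpha\zeta)$ that contributes. By the defining property \eqref{0skunk} of $\sko$, this part equals $\sko(g\w h^a)\w\varOmega$. Commuting $\varOmega$ (total degree $n+1$) past $\dbar(1/a)$ (total degree $1$) introduces a factor $(-1)^{n+1}$, giving
\begin{equation*}
\phi(z)=(-1)^{n+1}\int_{\P^{n+1}}\sko(g\w h^a)\w\dbar\frac{1}{a}\w\varOmega\,\phi.
\end{equation*}
An application of Lemma~\ref{strut} with test form $\xi=\sko(g\w h^a)\phi$ then yields the first equality in the Proposition, with essentially the same approximation argument as in the proof of Proposition~\ref{primat} if one wants to allow $\phi$ to be defined only on $X$.

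For the second equality, I would invoke the identity $\delta_A\xi=(-1)^n\sko(\xi)\w\omega'$ established in the paragraph preceding the Proposition for $\xi$ of bidegree $(n+1,n)$. Only the $(n+1,n)$-component of $h^a\w g$ contributes when pulled back to $X$, and since $h^a$ has bidegree $(1,0)$ while $g_{n,n}$ has total degree $2n$, the swap $g_{n,n}\w h^a=(-1)^{2n}h^a\w g_{n,n}=h^a\w g_{n,n}$ is sign-free. Hence $(g\w h^a)_{n+1,n}=(h^a\w g)_{n+1,n}$, so $\sko(g\w h^a)\w\omega'=(-1)^n\delta_A(h^a\w g)$ after restriction to $X$. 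Combining with the leading $(-1)^{n+1}$ from the first step gives the overall sign $(-1)^{2n+1}=-1$, which establishes the second equality.

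The main obstacle is sign bookkeeping: four separate contributions need to be combined correctly — extracting the $(n+1,n)$-part through $\sko$, commuting $\varOmega$ past $\dbar(1/a)$, swapping $g_{n,n}$ with $h^a$, and the factor $(-1)^n$ appearing in the $\delta_A$-identity. A secondary technical point is justifying that $\phi$, a priori only a section of $\Ok(\ell)$ over $X$, may be replaced by a smooth ambient extension so that \eqref{ug} applies; this is done exactly as in the proof of Proposition~\ref{primat}, by forming $\Phi_0-\dbar\Phi_0\w B$ and noting that the $\dbar$-correction does not affect the value of the integral at $\lambda=0$.
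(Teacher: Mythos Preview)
Your argument is correct and follows essentially the same route as the paper: start from \eqref{ug}, extract the $(n+1,n)$-part via $\sko$, commute $\varOmega$ past $\dbar(1/a)$ to pick up $(-1)^{n+1}$, apply Lemma~\ref{strut}, and then use the identity $\delta_A\xi=(-1)^n\sko(\xi)\w\omega'$ for the second equality. Your treatment is in fact slightly more careful than the paper's, which does not explicitly comment on the swap $g\w h^a\leftrightarrow h^a\w g$ in the top-degree component.
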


Given the  situation  in  Section~\ref{ponton}, 
the dehomogenization of
\begin{multline}\label{hurra4}
q=\sum_{k=1}^{\min(m,n+1)}
(-1)^{n+1}\\
\int_{a=0} \sko\Big[\alpha^{\rho-\kappa_0+n+1-dk}\w (\delta_{h})_{k-1} \frac{\bar f\cdot e\w(d\bar f\cdot e)^{k-1}}
{|f|^{2k}}\w h^a(\alpha\zeta,z)\Big]\w \omega'\phi.
\end{multline}
is thus a tuple of polynomials $Q_j$ such that \eqref{hummer} and
\eqref{polupp} hold on $X$.

\def\listing#1#2#3{{\sc #1}:\ {\it #2},\ #3.}

\end{document}